\crefname{lemma}{Lemma}{Lemmas}
\newtheorem{theorem}{Theorem}
\newcommand{\R}{\mathbb{R}}
\newcommand{\Z}{\mathbb{Z}}
\newtheorem{lemma}{Lemma}[section]
\newcommand{\U}{\mathcal{U}}
\newcommand{\la}{\langle}
\newcommand{\ra}{\rangle}
\newcommand{\E}{\mathcal{E}}
\renewenvironment{proof}[1][\proofname]{
\par\pushQED{\qed}\normalfont
\trivlist\item[\hskip\labelsep\bfseries#1\@]
\ignorespaces}{}
\numberwithin{equation}{section}
\theoremstyle{definition}
\newtheorem{definition}{Definition}[section]
\title{Existence of Optical Vortex Solitons in Photorefractive Media}
\author{Tianyi Zhang$^1$ \ Luciano Medina$^2$  \ Zihan Zhang$^2$ }
\date{%
    $^1$Fu Foundation School of Engineering and Applied Science, Columbia University, New York, New York 10027\\%
    $^2$Courant Institute of Mathematical Sciences, New York University,
    New York, New York 10012\\[2ex]%
    \today
}
\begin{document}

\maketitle

\begin{abstract}
Optical propagation and vortices in nonlinear media have been intensively studied in modern optical physics. In this paper, we establish constraints regarding the propagation constant and provide an existence theory and numerical computations for positive exponentially decaying solutions for a class of ring-profiled solitons in a type of nonlinear media known as a photorefractive nonlinearity. Our methods include constrained minimization and finite element formalism, and we study the vortex profile and its propagation by fixing the energy flux.
\end{abstract}

\section{Introduction}
Vortices has long been intensely studied in may areas in physics and mathematics. In particular, quantum vortices, also known as Abrikosov vortex or fluxon, are vortex solutions of $2$- or $3$-dimensional wave equations first predicted by Lars Onsager in the context of superfluid. Mainly developed by Alexi Abrikosov, vortex solutions later arose from the Ginzburg-Landau equations in the context of superconductivity and served as a keystone in the explanation of type--II superconductors using the Ginzburg-Landau equations which describes the phase transition of superconductors using variation principle. 

In the studies of optical beams, Chiao, Garmire and Townes~\cite{ccw6} developed their monumental work in 1964 on the nonlinear propagation of light. They derived the nonlinear Schr\"odinger equation from simplifying the Ginzburg-Landau equations in $1+1$-dimensions and demonstrated the existence of self-reinforced wave packets, later known as ``solitons"~\cite{term}. It is the phenomenon of nonlinear propagation of high-intensity light beams enabling the light beams to produce their own waveguide and propagate without spreading. They then proposed that that, in $2$- or $3$-dimensions, the the propagation wave vortices could also shown self-reinforced or soliton behaviors. 

Intuitively, vortex solitons are phase singularities where energy flows around a point. Approaching the center of the vortex, the velocity reaches infinity, and yet due to the finite energy flux, the intensity of the center will vanish~\cite{vortex}. One may essentially picture this kind of ring-profiled vortex soliton as a ring of light around a black spot directed along its axis of propagation~\cite{retracted}. Such vortices have widespread applications in many areas of mathematics and physics, including condensed matter physics, particle interactions, cosmology, superconductivity, quantum information processing, and wireless communication~\cite{retracted, ccw2, ccw3, ccw4, vortex, ccw15, ccw24, ccw26, ccw29}, and have since been intensively studied in optics, both theoretically and experimentally~\cite{ccw7, ccw9, ccw11, ccw18, ccw21, ccw28, ccw30}, and observed in various nonlinear media~\cite{ccw2, ccw3, ccw24, ccw26, ccw29}. 

In this paper, we establish a rigorous mathematical theory for a new type of nonlinearity encountered in an experiment reported by Fleischer et al.~\cite{photonic lattice}, known as a photorefractive nonlinearity~\cite{nonlinear1, nonlinear2}. Photorefractive effect essentially describes the behavior of certain materials responding to light by altering their refractive index~\cite{refra}. Similar applied analysis papers have pursued the existence of this nonlinearity in various media in an attempt to determine the geometrical properties and constraints of the potentially observable vortex solitons, and a number of bounds have been derived for the energy flux and propagation constants~\cite{short, q hall effect medina, saturable medina, retracted, cubic}. Proving the theoretical existence of the photorefractive nonlinearity will provide insights on the future direction of experimental research.

In modern optics, light is theoretically described by a complex-valued wave function governed by the nonlinear Schr\"odinger equations~\cite{ccw1, ccw8, ccw16, ccw17, ccw19, ccw20, ccw23, ccw27}. In the experimental setup used by Fleischer et al.~to create photonic lattice solitons via nonlinear optical induction~\cite{photonic lattice}, the wave function is written as two coupled nonlinear differential equations: 
\begin{align}
\begin{gathered}
i \phi_z + \frac{1}{2 k_1} \nabla_\perp^2 \phi - \Delta n_1(I) \phi = 0, \\
i \psi_z + \frac{1}{2 k_2} \nabla_\perp^2 \psi - \Delta n_2(I) \psi = 0.
\end{gathered}
\end{align}
In these equations, $\psi$ describes the slowly varying amplitude of the lattice wave and $\phi$ is the soliton-forming probe; both are complex-valued functions. $k_1, k_2$ relate to the anisotropy of the indices of refraction. Here, $\nabla^2_\perp$ denotes the Laplace operator over the transverse plane of the coordinate. $\Delta n_1, \Delta n_2$ are the nonlinear index changes induced by the total intensity $I = |\phi|^2+|\psi|^2$. Fleischer et al. chose a photorefractive screening technique in which the change in index is given by
\begin{align}
\Delta n_1 (I) = \frac{P}{1 + I} \quad \text{and} \quad \Delta n_2 (I) = \frac{Q}{1 + I}, \label{deltan}
\end{align}
where $P, Q \ne 0$ denote coupled parameters \cite{photonic lattice2}.

As our focus here is a ring-profiled spatial soliton, we expect the field to be written in polar coordinates over $\R^2$ using an $m$-vortex ansatz~\cite{yang, ccw25} as
\begin{align}
\phi(r) = e^{i(\beta_1 z + m_1\theta)} u(r), \quad \psi(r) = e^{i(\beta_2 z+ m_2\theta)} v(r),
\end{align}
where $\beta_1, \beta_2\in \R$ are the wave propagation constants, $\theta$ represent the phase, and $m_1, m_2\in\Z$ are known as vortex numbers, or topological charges of vortices. $u, v$ are real-valued, radial profile functions which gives rise to the amplitude of the solitons. The presence of the the vortex at $r=0$ requires $u(0) = v(0)= 0$, and as in~\cite{ccw25}, the self-concentrating effect of solitons suggest $u(r), v(r)$ may be assumed to vanish at a large distance $R>0$~\cite{yang}. Thus, we here present the boundary condition $v(R) = u(R) = 0$ for some $R>0$. 

Using the changes in index given by \eqref{deltan}, we arrive at a boundary value problem for the coupled nonlinear differential equations
\begin{align}
    \begin{gathered}
    (ru_r)_r - \frac{m_1^2}{r} u - \frac{ru}{1 + (u^2 +v^2)} = \beta_1 ru,\quad 0 < r < R ,\\
    (rv_r)_r - \frac{m_2^2}{r} v - \frac{rv}{1 + (u^2 +v^2)} = \beta_2 rv,\quad 0 < r < R
    \end{gathered} \label{45}
\end{align}
satisfying the boundary conditions
\begin{align}
u(0)=u(R)=v(0)=v(R)=0.\label{1.4cond}
\end{align}
In this paper, we prove the existence of semitrivial solutions of the coupled system \eqref{45}--\eqref{1.4cond} by simplifying the coupled system into a single ordinary differential equation: these are cases where $v$ is a scalar multiple of $u$. Here, we use the substitution $v = \sqrt{\alpha - 1}\ u$, $\alpha \ge 1$, for convenience. In this case, as we will later prove in Lemma \ref{2sol}, if both $u, v$ are nonzero, $|m_1| = |m_2|$ and $\beta_1 = \beta_2$. If either $u, v$ is equal to $0$, the problem is already a single equation with $\alpha=1$. Hence, we can simplify \eqref{45} as a two-point boundary value problem for the nonlinear ordinary differential equation
\begin{align}
(ru_r)_r - \dfrac{m^2}{r} u - \dfrac{r u}{1 + \alpha u^2} &= \beta r u,
\label{equ}
\end{align}
with the boundary condition
\begin{align}
u(0)=u(R) &= 0.
\label{cond0}
\end{align}
with an undetermined parameter $\beta$ and prescribed $R$, for any given $m\in\mathbb{Z}$

Recall the physical meaning of $u$ as the soliton amplitude. We are interested in the nontrivial solutions which are classic, positive solutions of $u$. 
\begin{definition}[Positive Solutions]
    A positive solution of \eqref{equ} and \eqref{cond0} means $u \in C^2[0, R]$ is a classic solution to \eqref{equ} and \eqref{cond0} such that
    \begin{align}
    u(r) > 0\ \text{ for all }\ r\in(0, R).\label{positive}
    \end{align}
    By classic solution, we mean $u$ satisfy \eqref{equ} and \eqref{cond0} pointwise almost everywhere. 
\label{PosSol}
\end{definition}

    
To tackle this problem, we shall use the methods of calculus of variation which will be discussed in section 3. This relies on a constrained minimization approach that views \eqref{equ} as an eigenvalue problem and that the propagation constant $\beta$ arises as a Lagrange multiplier. In this sense, the differential equation can be viewed as the Euler--Lagrange equation of the action functional
\begin{align}
I(u) = \dfrac12\int_0^R \left\{ r u_r^2 + \dfrac{m^2}{r}u^2 + \frac{r}{\alpha}\ln(1 + \alpha u^2) \right\} \, dr \label{eqI}
\end{align}
subject to the constraint functional 
\begin{align}
P(u) 
= 2\pi\int_0^R r u^2 \, dr = P_0,
\label{eqP}
\end{align}
where $P_0>0$ is fixed and $\beta$ appears via the Lagrange multiplier of the constrained optimization problem
\begin{align}
I_0 = \inf_{u\in\U} \left\{ I(u)| P(u) = P_0 \right\},\label{minProb}
\end{align} 
with $\U$ denoting an admissible class
\begin{align}
\U = \{u(r) \text{ absolutely continuous over } [0, R], u(0)=u(R)=0, \E(u)<\infty \} \label{Uclass}
\end{align}
and the energy functional defined by
\begin{align}
\E (u) = \int_0^R \left\{ ru_r^2 + \dfrac{m^2}{r} u^2 \right\}\, dr.
\label{eqE}
\end{align}
The functional $P(u)$ has been used as the beam power~\cite{ccw17, beam}, energy flux~\cite{ccw27}, or stability integral~\cite{stab integral}. In our case, we refer to it as the energy flux. {
Similar to the method described in \cite{yang}, it suffices to show that and the optimal solution $u\in \mathcal{U}$ under action functional \eqref{eqI} is a positive solution satisfying \eqref{equ} and \eqref{cond0} pointwise. 
}

Our major results are as follows:
\begin{theorem}
For the existence of positive solutions of \eqref{equ}--\eqref{cond0}, the following conditions are necessary:
\begin{enumerate}[1.]
\item 
\begin{align}
\beta &< - \dfrac{m^2 + r_0^2}{R^2},
\label{upper}
\end{align}
where $r_0 \approx 2.404825$ is the first zero of the Bessel function $J_0$~\cite{yang}. 
\item
\begin{align}
\max_{0<r<R}\{u(r)\}^2 > -\frac1{\alpha}\left[
\left(\beta + \dfrac{m^2}{R^2}\right)^{-1}+1\right]
\label{maxu}
\end{align}
with $\max_{0<r<R}\{u(r)\}$ denoting the peak of the profile curve.
\item
If 
\begin{align}
\beta > -\dfrac{m^2}{R^2} - 1, \label{1.15}
\end{align}
then the solution $u$ is bounded by the exponentially decaying estimate
\begin{align}
u(r)^2 \le C_{\epsilon_{0}} e^{-\sqrt{\epsilon}_0 r}
\label{expon}
\end{align}
as $r\to R^-$. Here, $\epsilon_0$ is a constant bounded by $\epsilon_0 > 2\left(\beta + \dfrac{m^2}{R^2} + 1 \right)> 0$, and $C_{\epsilon_0}$ depends on $\epsilon_0$.
\end{enumerate}
\end{theorem}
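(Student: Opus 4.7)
The plan is to derive all three necessary conditions directly from the ODE \eqref{equ}, exploiting respectively (i) testing against $u$ and integration by parts, (ii) inspecting the ODE at an interior maximum, and (iii) a maximum-principle comparison with an exponential supersolution near $r = R$.

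For \eqref{upper}, I multiply \eqref{equ} by $u$ and integrate from $0$ to $R$. Since $u(0) = u(R) = 0$, integration by parts yields
\begin{align*}
-\beta \int_0^R r u^2 \, dr = \int_0^R \left( r u_r^2 + \frac{m^2}{r} u^2 \right) dr + \int_0^R \frac{r u^2}{1 + \alpha u^2}\, dr,
\end{align*}
and the last integral is strictly positive because $u \not\equiv 0$. For the middle term I use $m^2/r \geq (m^2/R^2)\, r$ on $(0, R)$, so that $\int_0^R (m^2/r) u^2\, dr \geq (m^2/R^2)\int_0^R r u^2\, dr$, together with the Bessel--Poincar\'e inequality $\int_0^R r u_r^2\, dr \geq (r_0/R)^2 \int_0^R r u^2\, dr$, which arises as the variational characterization of the principal eigenvalue of the Dirichlet problem $-r^{-1}(r w_r)_r = \lambda w$ on $(0,R)$, whose eigenfunction is $J_0(r_0 r/R)$. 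Combining produces the strict bound $-\beta \int r u^2\, dr > ((r_0^2 + m^2)/R^2)\int r u^2\, dr$, and cancelling $\int r u^2\, dr > 0$ gives \eqref{upper}.

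For \eqref{maxu}, let $r^* \in (0, R)$ be a point at which $u$ attains its maximum; positivity of $u$ on $(0,R)$ excludes the endpoints. Then $u_r(r^*) = 0$ and $u_{rr}(r^*) \leq 0$, so $(r u_r)_r|_{r = r^*} = r^* u_{rr}(r^*) \leq 0$. Rewriting \eqref{equ} as $(r u_r)_r = r u (\beta + m^2/r^2 + 1/(1 + \alpha u^2))$ and dividing by $r^* u(r^*) > 0$ gives $\beta + m^2/(r^*)^2 + 1/(1 + \alpha u(r^*)^2) \leq 0$. Since $1/(1 + \alpha u(r^*)^2) > 0$, this already forces $-\beta - m^2/(r^*)^2 > 0$, so I may solve for $u(r^*)^2$ to obtain $\alpha u(r^*)^2 \geq (1 + \beta + m^2/(r^*)^2)/(-\beta - m^2/(r^*)^2)$ in the nontrivial case (when the right side is negative, \eqref{maxu} is automatic). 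The map $s \mapsto (1 + \beta + s)/(-\beta - s) = 1/(-\beta - s) - 1$ is increasing where defined, and $m^2/(r^*)^2 \geq m^2/R^2$ since $r^* \leq R$, so replacing $r^*$ by $R$ on the right and rearranging gives exactly \eqref{maxu}.

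For the exponential estimate \eqref{expon}, set $w = u^2$. Differentiating and using \eqref{equ} yields
\begin{align*}
(r w_r)_r = 2 r u_r^2 + 2 r w\, \lambda(r), \qquad \lambda(r) := \beta + \frac{m^2}{r^2} + \frac{1}{1 + \alpha u(r)^2}.
\end{align*}
By \eqref{1.15} and $u(R) = 0$, $\lambda(r) \to \beta + m^2/R^2 + 1 > 0$ as $r \to R^-$, so for the chosen $\epsilon_0$ there is $r_* \in (0, R)$ with $2\lambda(r) \geq \epsilon_0$ on $[r_*, R]$. Hence $w$ satisfies $(r w_r)_r \geq \epsilon_0 r w$ on $[r_*, R]$, i.e.\ is a subsolution of $(r W_r)_r = \epsilon_0 r W$. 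A direct computation shows $W(r) := C_{\epsilon_0}\, e^{-\sqrt{\epsilon_0} r}$ satisfies $(r W_r)_r - \epsilon_0 r W = -\sqrt{\epsilon_0}\, W < 0$, so it is a strict supersolution. Taking $C_{\epsilon_0} = (\max_{[0,R]} u)^2 \, e^{\sqrt{\epsilon_0} r_*}$ ensures $W(r_*) \geq w(r_*)$ and $W(R) > 0 = w(R)$, and the weak maximum principle applied to $W - w$ delivers $w \leq W$ on $[r_*, R]$, which is \eqref{expon}. The main obstacle will be the quantitative bookkeeping here: both $r_*$ and $C_{\epsilon_0}$ depend on how tight \eqref{1.15} is and on an a-priori bound for $\max u$, and the comparison must be set up on $[r_*, R]$ so as to stay away from the cylindrical singularity at $r = 0$. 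Parts 1 and 2 should be routine, reducing respectively to integration by parts together with the Bessel--Poincar\'e inequality, and to a maximum-point test applied to the ODE.
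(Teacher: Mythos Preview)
Your proposal is correct and follows essentially the same strategy as the paper for all three parts: testing \eqref{equ} against $u$ and invoking the Bessel--Poincar\'e inequality for \eqref{upper}, evaluating the ODE at an interior maximizer for \eqref{maxu}, and comparing $u^2$ with the exponential $C e^{-\sqrt{\epsilon_0}r}$ near $r=R$ for \eqref{expon}. Your Part~3 is in fact more carefully set up than the paper's---you explicitly fix $C_{\epsilon_0}$ so that the boundary inequalities $W(r_*)\ge w(r_*)$ and $W(R)\ge w(R)$ hold and then invoke the maximum principle, whereas the paper arrives at the same comparison function but asserts $u^2-\xi<0$ from premises that do not by themselves justify it.
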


\begin{theorem}
There exists solution pair $(u, \beta)$ satisfying the differential equations \eqref{equ}--\eqref{cond0} pointwise, where $u$ is a positive solution following Definition \ref{PosSol} and $\beta \in \R$. Such a solution can be obtained by solving the constrained minimization problem \eqref{minProb}, from which $\beta$ arises as a Lagrange multiplier.
\end{theorem}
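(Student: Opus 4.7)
The plan is to obtain the pair $(u,\beta)$ by solving the constrained minimization problem \eqref{minProb} directly and then verifying that the minimizer is a positive classical solution with $\beta$ arising as the Lagrange multiplier. I would first work in the Hilbert space $H$ obtained by completing the absolutely continuous functions on $[0,R]$ vanishing at both endpoints under the inner product induced by $\mathcal{E}$ in \eqref{eqE}. Every term in $I(u)$ is nonnegative (since $\ln(1+\alpha u^2)\ge 0$), so $I_0\ge 0$ is finite; trial functions such as $c\,r^{|m|}(R-r)$, scaled so that $P(u)=P_0$, show the admissible set is nonempty. A minimizing sequence $\{u_n\}\subset\mathcal{U}$ with $P(u_n)=P_0$ and $I(u_n)\to I_0$ therefore exists.

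From $\mathcal{E}(u_n)\le 2I(u_n)$, this sequence is bounded in $H$, so up to a subsequence $u_n\rightharpoonup u$ weakly in $H$. A weighted Rellich-type embedding $H\hookrightarrow L^2([0,R];\,r\,dr)$ should deliver strong convergence in the latter space; hence $P(u)=P_0$ and the endpoint conditions $u(0)=u(R)=0$ survive in the limit. Weak lower semicontinuity of the quadratic part of $I$, combined with Fatou's lemma for the nonnegative term $\frac{r}{\alpha}\ln(1+\alpha u_n^2)$, yields $I(u)\le \liminf_{n} I(u_n) = I_0$, so $u$ is a minimizer. Replacing $u$ by $|u|$ leaves both $I$ and $P$ unchanged, so we may take $u\ge 0$ without loss of generality.

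A standard Lagrange multiplier argument, varying $u\mapsto u+t\eta$ for admissible $\eta\in\mathcal{U}$ and differentiating at $t=0$, then gives the weak form of \eqref{equ} with $\beta$ proportional to the multiplier. Since the nonlinearity $s\mapsto s/(1+\alpha s^2)$ is smooth and bounded on the range of $u$, bootstrapping with classical ODE regularity upgrades $u$ to $C^2(0,R)$; matching against the Bessel-type local expansion of \eqref{equ} near $r=0$ secures $u\in C^2[0,R]$ with $u(0)=0$ in the classical sense. Finally, rewriting \eqref{equ} as $(ru_r)_r = V(r)\,u$ with $V(r)=m^2/r + r/(1+\alpha u^2) + \beta r$ bounded on each compact subinterval of $(0,R)$, the strong maximum principle promotes $u\ge 0$, $u\not\equiv 0$ to $u>0$ on $(0,R)$, producing a positive solution in the sense of Definition~\ref{PosSol}.

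The main obstacle I anticipate is the degenerate weighted structure at $r=0$: justifying the compactness of $H\hookrightarrow L^2([0,R];\,r\,dr)$ and controlling the singular term $m^2/r$ both in the variational limit and in the subsequent regularity step are not captured by off-the-shelf Sobolev embedding theorems. A direct estimate exploiting the forced vanishing of admissible functions at the origin, or an $\varepsilon$-cutoff approximation near $r=0$ followed by passage to the limit, is likely needed; once these analytic nuisances at the origin are handled, classical nondegenerate elliptic theory applies on each interior subinterval and the remainder of the argument proceeds routinely.
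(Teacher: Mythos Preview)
Your outline is correct and follows the same overall architecture as the paper: build a minimizing sequence for \eqref{minProb}, extract a weak limit via compactness, use weak lower semicontinuity of $I$ (Fatou on the nonnegative pieces), recover $\beta$ as a Lagrange multiplier, and then upgrade nonnegativity to strict positivity. The differences are in implementation, and they line up almost exactly with the obstacle you flag at the end. Rather than proving a bespoke weighted Rellich embedding for $H\hookrightarrow L^2([0,R];r\,dr)$, the paper sidesteps the degeneracy at $r=0$ by reinterpreting each $u_k$ as a radially symmetric function on the disk $B_R\subset\R^2$; then $\E(u_k)$ is (up to $2\pi$) the ordinary Dirichlet energy, the admissible class sits inside $W_0^{1,2}(B_R)$, and the compactness $W^{1,2}(B_R)\hookrightarrow\hookrightarrow L^p(B_R)$ is off the shelf. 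The boundary condition $u(0)=0$ is recovered not from a Bessel expansion but from the one-line Cauchy--Schwarz estimate $|u_k^2(r_2)-u_k^2(r_1)|\le 2\bigl(\int r u_{k,r}^2\bigr)^{1/2}\bigl(\int_{r_1}^{r_2} u_k^2/r\bigr)^{1/2}$, which forces continuity at $0$ once $\int u^2/r<\infty$. For strict positivity the paper uses the ODE uniqueness theorem at an interior zero (where $u=u_r=0$) in place of your maximum-principle argument; both are valid, but the ODE route avoids any discussion of the sign of the zeroth-order coefficient. In short, your plan works, but the two-dimensional radial lift is the cleanest way to resolve precisely the compactness and origin-regularity issues you anticipated.
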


The remainder of this paper is structured as follows. In section 2, we provide proofs for Theorems 1. In section 3, we prove our Theorem 2, which is the main theorem demonstrating the existence of positive solutions using the constrained minimization approach. Section 4 presents a selection of results from numerical computations using the finite element method and provides a summary of the results. The proof of a lemma used in the paper is provided in the appendix.

\section{Proof of Theorem 1}

To prove \eqref{upper}, multiplying by $u$ and integrating by parts over $r$, we obtain
\begin{align}
    \left.ru u_r\right|^R_0 - \int_0^R ru_r^2 \, dr &= \int_0^R \left( \beta + \dfrac{m^2}{r^2} + \dfrac{1}{1 + \alpha u^2 }\right) ru^2\, dr. \label{uint}
\end{align}

We first want to show that the first term is $0$. Recall that we are interested in a positive solution as defined in Definition \ref{PosSol}. Since $u\in C^2[0, R]$, as $r\to R$, $u_r$ is uniformly bounded which implies $ru u_r \to 0$ as $r\to R$. We will then prove that $ru(r)u_r(r)\to 0$ as $r\to 0$. Suppose that $\liminf_{r\to0}\{ ru|u_r| \}\ne 0$. Then, there exists some $\epsilon>0$ and $ \delta \in (0, R]$ such that $ru|u_r| > \epsilon$ for any $r \in (0, \delta)$. Using the Cauchy--Schwarz inequality, we have that
\begin{align}
    \left(\int_0^\delta ru_r^2\, dr\right)^{1/2} \left(\int_0^\delta \dfrac{u^2}{r}\, dr\right)^{1/2} \ge \int_0^\delta u|u_r|\, dr > \int_0^\delta \dfrac{\epsilon}{r}\, dr = \infty.
\end{align}
This is a contradiction with regard to the energy functional, which implies that \\$\liminf_{r\to0}\{ ru|u_r|\}= 0$. Hence, we can establish the following inequality:
\begin{align}
    - \int_0^R ru_r^2 \, dr \ge \int_0^R \left( \beta + \dfrac{m^2}{r^2} 
    + \dfrac{1}{1 + \alpha u^2 }\right) ru^2 \ dr > \left( \beta + \dfrac{m^2}{R^2} 
    \right) \int_0^R ru^2 \ dr. 
    \label{cond1}
\end{align}
Recall Poincaré's inequality\cite{evans} 
\begin{align}
    \dfrac{R^2}{r_0^2} \int_0^R r u_r^2 \, dr \ge \int_0^R ru^2 \, dr
    \label{16}
\end{align}
where $r_0 \approx 2.40483$ is the first zero of the Bessel function $J_0$. Together with \eqref{cond1}, we have that
\begin{align}
    - \dfrac{{r_0}^2}{R^2}\int_0^R ru^2 \, dr &> 
    \left( \beta + \dfrac{m^2}{R^2} \right) \int_0^R ru^2 \ dr, 
\end{align}
which results in $\beta < - \dfrac{m^2 + r_0^2}{R^2}$, as claimed.
\qed


To prove \eqref{maxu}, let $u \in C^2[0, R]$ satisfy boundary condition \eqref{cond0}. As we are interested in nontrivial solutions of $u$, we know that there exists a maximizer $\eta\in (0, R)$ such that $ u(\eta) = \max\{u\}$, which also implies $u_{rr}(\eta) \le 0$ and $ u_r(\eta) = 0$. At $r=\eta$, \eqref{equ} becomes
\begin{align}
    \eta u_{rr}(\eta) - \dfrac{m^2}{\eta^2} \eta u(\eta) 
    - \dfrac{\eta u(\eta)}{1+ \alpha u(\eta)^2} &= \beta \eta u(\eta),
\end{align}
which simplifies to
\begin{align}
    u_{rr}(\eta) &= \left( \beta + \dfrac{m^2}{\eta^2} +
    \dfrac{ 1}{1+ \alpha u(\eta)^2} \right) u(\eta).
\end{align}
Taking the concavity at the peak $u_{rr}(\eta)\le 0$, we arrive at the inequality
\begin{align}
    0 \ge \left( \beta + \dfrac{m^2}{\eta^2} + \dfrac{ 1}{1+ \alpha u(\eta)^2} \right) u(\eta).
    \end{align}
This simplifies to the following expression for the maximum of $u$, as claimed:
\begin{align}
    u(\eta)^2 \ge -\frac{1}{\alpha}\left(\dfrac{1}{\beta+\dfrac{m^2}{\eta^2}}+1\right)>
    -\frac{1}{\alpha}\left(\dfrac{1}{\beta+\dfrac{m^2}{R^2}}+1\right).
\end{align}
Furthermore, we see that if an desired inequality
\begin{align}
    \beta > - \dfrac{m^2}{R^2} - 1,
\end{align}
holds between the prescribed variables $\beta, m, R$, then $\max\{u\}>0$. This can serve as a sufficient but not necessary condition for the existence of nontrivial solution. 
\qed 


To prove \eqref{1.15}--\eqref{expon}, let $\Delta$ denote the Laplacian operator in polar coordinates, defined by 
\begin{align}
\Delta = \dfrac{1}{r}\dfrac{\partial}{\partial r} \left(r\, \dfrac{\partial}{\partial r}\right).
\end{align}
Rewriting \eqref{equ} by replacing $(ru_r)_r$ with $r \Delta u$ and simplifying gives
\begin{align}
\Delta u(r) &> \left(\beta + \dfrac{m^2}{R^2} + \dfrac{1}{1+ \alpha u(r)^2}\right) u.
\end{align}
Using the product rule $\Delta u^2 \ge 2u \Delta u $, we then obtain
\begin{align}
\Delta u(r)^2 &\ge 2\left(\beta + \dfrac{m^2}{R^2} + \dfrac{1}{1 + \alpha u(r)^2} \right) u(r)^2. \label{34}
\end{align}
Because $\lim_{r\to R^-} \dfrac{1}{1+ \alpha u^2} = 1$, for all $\epsilon > 0 $, there exists $R'\in (0, R)$ such that 
\begin{align}
\dfrac{1}{1+ \alpha u(r)^2} > 1 - \epsilon \label{35}
\end{align}
for all $r \in (R', R)$. Substituting \eqref{35} into \eqref{34}, we then have
\begin{align}
\Delta u(r) ^2 &> 2\left(\beta + \dfrac{m^2}{R^2} + 1 - \epsilon \right) u(r)^2.
\end{align}
Insert the condition $\beta > -\dfrac{m^2}{R^2} - 1$ and select a small $\epsilon$ such that $\epsilon < \beta + \dfrac{m^2}{R^2} + 1 $. It can then be shown that there exists some $R'$ such that, for all $r\in [R', R]$,
\begin{align}
\Delta u(r)^2 &> \epsilon_0 u(r)^2 \label{37}
\end{align}
for some $\epsilon_0>0$. Consider the decreasing exponential function $\xi$ with respect to its exponential coefficient $\sqrt{\epsilon_0}$ and a constant $C>0$ defined as
\begin{align}
\xi(r) = Ce^{-\sqrt{\epsilon_0} r}, 
\label{38}
\end{align}
for which the Laplacian is computed to be
\begin{align}
\Delta \xi(r) = \left(\epsilon_0 - \frac{\sqrt{\epsilon_0}}{r}\right) \xi(r).
\end{align}
Subtracting \eqref{37} from \eqref{38} gives 
\begin{align}
\Delta(u(r)^2 - \xi(r)) &> \epsilon_0 \left(u(r)^2 - \xi(r) \right) + \frac{\sqrt{\epsilon_0}}{r}\, \xi(r).
\end{align}
Because $u^2(r) \ge u^2(R)=0$ and $C e^{-\sqrt{\epsilon_0} r} > C e^{-\sqrt{\epsilon_0} R}$ for all $r\in [R', R]$, we find that 
\begin{align}
u^2(r) - \xi(r) < u^2(R) - \xi(R) = -\xi(R) < 0.
\end{align}
This establishes the following inequality with respect to exponential decay:
\begin{align}
u^2(r) < Ce^{-\sqrt{\epsilon_0} r}, \quad r \in [R', R].
\end{align}
We further denote this as $u^2(r) < C_{\epsilon_0}e^{-\sqrt{\epsilon_0} r},\ r\in[R_{\epsilon_0}, R]$ to emphasize the dependence of $C, R$ on $\epsilon_0$, similar to \cite{saturable medina}.
\qed

\section{Proof of Theorem 2}
To prove Theorem 2, we employ a variational principle and constrained minimization problem. Recall the definitions of the action functional \eqref{eqI}, constraint functional \eqref{eqP}, admissible class \eqref{Uclass}, and the minimization problem \eqref{minProb}. 

The differential equation \eqref{equ} can be viewed as an eigenvalue problem where the propagation constant $\beta$ is undetermined and acts as a Lagrange multiplier. In order to prove the existence of a solution pair $(u, \beta)$, it suffices to show that a solution to the minimization problem \eqref{minProb} exists subjected to the prescribed value of energy flux $P_0$. 

We first shows the existence of feasible solution to the constraint minimization problem \eqref{minProb} by showing the existence of $u\in\mathcal{U}$ such that $P(u) = P_0$ and $I(u) < \infty$. Since $x \ge \dfrac{1}{\alpha} \ln (1 + \alpha x)$ for $x\ge0, \alpha>0$, we see that 
\begin{align}
    I(u) \le \frac12 \int_0^R \left\{ ru_r^2 + \frac{m^2}{r}u^2 + ru^2\right\} dr
    = \frac12 \int_0^R \left\{ru_r^2 + \frac{m^2}{r}u^2\right\} dr + \frac{P_0}{4\pi}.
    \label{ineqIP}
\end{align}
Consider the following function $f\in\mathcal{U}$ defined as
\begin{align}
    f(r) = \sqrt{\frac{30 P_0}{\pi R^6}}\ r(R-r).
\end{align}
Since the function was designed so that 
\begin{align*}
    P(f) = 2\pi \int_0^R r f(r)^2\, dr = \frac{30 P_0}{\pi R^6}\frac{\pi b^2R^6}{30} = P_0, 
\end{align*}
along with the property $f(0) = f(R) = 0$, we see that $f$ belongs to the feasible set of \eqref{minProb}. By direct computations, we can compute for $I(f)$. Using the following computations 
\begin{align*}
    \int_0^R rf_r^2 \, dr = \frac{b^2 R^4}{4}, \quad \quad
    \int_0^R \frac{f^2}{r}\, dr = \frac{b^2 R^4}{12}, 
\end{align*}
we arrive at a finite value for $I(f)$
\begin{align}
    I(f) \le \left(\frac{15 + 5m^2}{R^2} + 1\right)\frac{P_0}{4\pi}.
\end{align}
Since $f$ is a feasible solution to \eqref{minProb} with finite objective value, the optimization problem is well defined. Note the lower bound of the action functional $I\ge 0$; the boundedness of $I$ gives us a weakly convergent subsequence $\{u_k\}_{k=1}^\infty$, which we can use it as minimizing sequence of \eqref{minProb} such that
\begin{align}
I(u_k) \to I_0\ \text{ as }\ k\to \infty \ \text{ and }\ I(u_1) \ge I(u_2) \ge \hdots \ge I_0.
\end{align}

Using boundedness and the fact that $\{u_k\}_{k=1}^\infty$ is minimizing, it follows that there exist $C>0$ independent of $k$ such that 
\begin{align}
    C = I(u_1) \ge \int_0^R ru_{k,r}^2 \, dr + m^2 \int_0^R \frac{u_k^2}{r}\, dr.
    \label{Cbound}
\end{align}
This inequality is further developed into 
\begin{align}
    \begin{split}
        (1 + R^2) C &\ge (1+R^2) \left(
        \int_0^R ru_{k,r}^2 \, dr + m^2 \int_0^R \frac{u_k^2}{r}\, dr.
        \right)\\
        &\ge \int_0^R r u_{k, r}^2\, dr + R^2 \int_0^R \frac{u_k ^2}{r} \, dr 
        \\
        &\ge\int_0^R ru_{k, r}^2 + ru_{k}^2\, dr.
        \end{split}
        \label{cbound}
\end{align}

Because the distributional derivative of $u$ satisfies $||u|_r| \le |u_r|$~\cite{ellipse} and the functionals $I$ and $P$ are both even, we have 
\begin{align}
    I(u_k) \ge I(|u_k|) \text{ and } P(u_k) = P(|u_k|),
\end{align}
which implies that we may modified minimizing sequence $\{u_k\}_{k=1}^\infty$ so that it consists of nonnegative valued functions $u_k>0$.
    
By setting $r = \sqrt{x^2 + y^2}$, we can view these functions as radially symmetric functions over the disk $B_R = \{(x, y) \in \R^2 | x^2 + y^2 \le R^2\}$  which all vanish at boundary $\partial B_R$. Together with \eqref{cbound}, $\{u_k\}_{k=1}^\infty$ is shown to be bounded under the radially symmetrically reduced norm, with definition
\begin{align}
||u_k|| := \int_0^R ru_k^2 + ru_{k, r}^2 \, dr,
\end{align}
for the standard Sobolev space $W_{0}^{1, 2}(B_R)$. Notice that we are using $\{u_k\}_{k=1}^\infty$ to denote both the minimizing subsequence as well as the radial function view. Since $u_k$ are radial symmetric function, we can write it as $u=u(r)$ which satisfy $u(R)=0$. Given that the sequence is monotonically decreasing but bounded in terms of the action functional, it is sufficient to show that $u_k$ converges weakly to a minimizer $u$ on $W_0^{1, 2}(B_R)$. Here $u$ is used to denote the limit of sequence.

Using the compact embedding $W^{1, 2}(B_R) \subset \subset L^p(B_R)$ for $p\ge 1$, the sequence $u_k$ converges strongly to $u$ in $L^p(0, R)$. Furthermore, for any $\epsilon \in (0, R)$, $\{u_k\}_{k=1}^\infty$ is a bounded sequence in the space $W^{1, 2}(\epsilon, R)$. If we apply compact embedding $W^{1, 2}(\epsilon, R) \subset \subset C[\epsilon, R]$, we see that $u_k\to u$ as $k\to \infty$ uniformly over $[\epsilon, R]$. 

To show that $u(0) = 0$, let $r_1, r_2 \in (0, R)$ such that $r_1<r_2$, we have Cauchy--Schwarz inequality 
\begin{align}
    \begin{split}
        \left|u_k^2(r_2) - u_k^2(r_1)\right| &= \left|\int_{r_1}^{r_2}\left(u_k^2 (r)\right)_r\, dr \right|
        \le \int_{r_1}^{r_2} 2\, |u_{k}(r)| \, |u_{k, r}(r)| \, dr \\
        &\le 2 \left(\int_{r_1}^{r_2} r u_{k, r}^2(r)\right)^{1/2} \left(\int_{r_1}^{r_2}\frac{u_k^2(r)}{r}\right)^{1/2} 
        \le 2 C^{1/2} \left(\int_{r_1}^{r_2}\frac{u_k^2(r)}{r}\right)^{1/2}
    \end{split}
    \label{r12}
\end{align}
where the constant $C$ is provided in \eqref{Cbound}. By letting $k\to\infty$, this becomes
\begin{align}
    \left|u^2(r_2) - u^2(r_1)\right| \le 2 C^{1/2} \left(\int_{r_1}^{r_2}\frac{u^2(r)}{r}\right)^{1/2}
\end{align}
In view of \eqref{cbound} and applying Fatou's Lemma, we have
\begin{align}
\int_0^R ru_{r}^2 \, dr & \le \liminf_{k\to\infty} \int_0^R ru_{k, r}^2 \, dr \\
\int_0^R \frac{u^2}{r}\, dr & \le \liminf_{k\to\infty} \int_0^R \frac{u_k^2}{r}\, dr.
\end{align}
We see that $\dfrac{u^2}{r} \in L^p(0, R)$. This implies that the right hand side of \eqref{r12} tends to zero as $r_1, r_2 \to 0$, which further implies that the limit exists for $u(r)^2, r\to0$ and it is  
\begin{align}
    \lim_{r\to 0}u^2 (r) = 0.
\end{align}
Thus, the boundary condition for $u(0) = 0$ is achieved.
Recall \eqref{ineqIP}, the $\dfrac{r}{\alpha}\ln(1 + \alpha u^2)$ term is bounded by
\begin{align}
    0 \le \liminf_{k\to\infty} \int_0^R \frac{r}{\alpha} \ln(1+\alpha u_k^2)\, dr \le \lim_{k\to \infty} \frac{P(u_k)}{4\pi}  =  \frac{P_0}{4\pi}. 
\end{align}

Using the above results, we can conclude that $u$ obtained from minimizing sequence $\{u_k\}_{k=1}^\infty$ for the minimization problem \eqref{minProb} satisfies the desired solution, i.e.\,positive solution and satisfying the boundary conditions. The functional $I$ is weakly lower semicontinuous
\begin{align}
I_0 = I(u) \le \liminf_{k\to \infty} I(u_k),
\end{align}
and the constrains can be achieved as
\begin{align*}
    P(u) = \lim_{k\to \infty} P(u_k) = P_0.
\end{align*}
Consequently, $\beta$ appears as an Lagrange multiplier. 

Since our solution $u$ is nonnegative, suppose there exists $r_0\in(r, R)$ such that $u(r_0) = 0$, $r_0$ would be a minimizer with $u_r(r_0) = 0$. Recall the uniqueness theorem for the initial value problem of ordinary differential equations, we must have the trivial solution $u(r) = 0$ for all $r\in(0, R)$ which contradict with the value for energy flux $P_0$. Therefore the positive condition $u(r)>0, r\in(0, R)$ is proved.  

\qed

\section{Computations Using Finite Element Method}
To approximate solutions for \eqref{minProb} and their corresponding eigenvalue $\beta$ given a fixed energy flux $P_0$, we employ variational principle and a type of finite element method known as Ritz-Galerkin method to approximate solutions to boundary valued problems in weak formulation. 

Viewing the  the admissible class $\U$ defined in \eqref{Uclass} as a vector space, we can describe its elements $u$ as a linear combination of basis functions $\{\psi_j\in C^1[0, R]\}_{j=1}^\infty$ with coefficients $c^i\in\R $:
\begin{align}
u &= \sum_{j=1}^\infty c^j \psi_j.
\label{inflincom}
\end{align} 
The vector space is then equipped with the weighted inner product
\begin{align}
\la u, \Tilde{u}\ra &= 2\pi \int_0^R ru\Tilde{u}\, dr
\label{inner prod}
\end{align}
to be viewed as a Hilbert Space. To approximate the solution numerically, we will choose a set of Schauder basis\cite{schauder} so that we can approximate the solution \eqref{inflincom} using finite dimension $N\in\mathbb{N}$ as
\begin{align}
u &= \sum_{j=1}^N c^j \psi_j. \label{mincomb}
\end{align}
Since the trigonometric functions can serve as a set of Schauder basis, the specific choice of basis functions we will use for the weighted inner product is
\begin{align}
    \phi_j(r) = \sin\left(\frac{j\pi r}{R}\right). \label{basis}
\end{align}
The orthogonal basis $\{\psi_j\}_{j=1}^N$ can be obtained by applying Gram--Schmidt process on \eqref{basis} such that $\langle\psi_i, \psi_j\rangle = \delta_{ij}$. To minimize error, we have implemented the modified Gram--Schmidt\cite{mgs} in code. 
Note that the energy flux can be computed with the inner product as follows:
\begin{align}
P(u) = \sum_{i, j}^N c^i c^j \langle \psi_i, \psi_j\rangle = 
\sum_{i, j}^N c^i c^j \delta_{ij} = \sum_{j}^N {(c^j)}^2.
\end{align}
Our code obtains an orthonormal basis by applying the modified Gram--Schmidt process on $\left\{\sin\left(j \pi r / R\right)\right\}_{j= 1}^N$. As $u$ can be viewed as a minimum of the action functional \eqref{eqI}, in view of \eqref{minProb} and \eqref{mincomb}, we arrive at the optimization problem 
\begin{align}
\min \left\{
F(c) = \left.I\left(\sum_{j = 1}^N c^j \psi_j \right) \right|\ P(u)= P_0 = \sum_{j}^N {(c^j)}^2
, \ u(r) > 0 \text{ for }r\in(0, R) \right\} 
\label{opti prob}
\end{align}
where we minimize $c \in \R^N$. We use MATLAB's Optimization Toolbox and the Chebfun package to numerically solve for \eqref{opti prob}. For convenience, all subsequent numerical computations have $\alpha = 1$ as the curves with $\alpha > 1$ generally have the same shape. The propagation constant then can be computed as follows: let $\lambda\in\R$ be the Lagrange multiplier, so that 
$\la I'(u), \Tilde{u}\ra = \lambda \la P'(u), \Tilde{u} \ra$.
$I'(u)$ and $P'(u)$ are given by
\begin{align}
\la I'(u), \tilde{u} \ra &= \int_0^R \left\{ r u_r \Tilde{u}_r + \frac{m^2}{r} u\Tilde{u} + 
\frac{ru\Tilde{u}}{ 1+\alpha u^2} \right\}dr 
\\
\la P'(u), \tilde{u} \ra &= 4\pi \int_0^R ru\Tilde{u} dr,
\end{align}
where $\tilde{u}$ is an arbitrary test function. The expression $\la I'(u), \Tilde{u}\ra = \lambda \la P'(u), \Tilde{u} \ra$ can then be expanded as
\begin{align}
\int_0^R \left\{
r u_r \Tilde{u}_r + \frac{m^2}{r} u\Tilde{u} + \frac{ ru\Tilde{u}}{1+ \alpha u^2}
\right\}dr = 4\pi \lambda \int_0^R ru \Tilde{u} dr. 
\label{lag eq1}
\end{align}
Applying integration by parts to the first term, we have
\begin{align}
-\int_0^R \left\{ (r u_r)_r - \frac{m^2}{r} u - \frac{ru}{1+ \alpha u^2} 
\right\} \tilde{u} dr = 4\pi \lambda \int_0^R ru \Tilde{u} dr.
\end{align} 
This gives the weak version of \eqref{equ}. We can therefore claim that $\beta = -4 \pi \lambda$. We rewrite \eqref{lag eq1} as 
\begin{align}
\int_0^R \left\{ r u_r^2 + \frac{m^2}{r} u^2 + \frac{r u^2}{1+ \alpha u^2} 
\right\}dr = -\beta\left(\frac{ P_0 }{2\pi}\right),
\end{align}
and finally arrive at an expression for $\beta$ that is suitable for numerical computation:
\begin{align}
\beta &= -\frac{2\pi}{ P_0 } \int_0^R \left\{
r u_r^2 + \frac{m^2}{r} u^2 + \frac{r u^2}{1+ \alpha u^2} 
\right\}dr.
\end{align}
To compute the error in the numerical computations, taking \eqref{equ} and rearranging into the form
\begin{align}
0 &= (ru_r)_r - \dfrac{m^2}{r} u - \dfrac{r u}{1 + \alpha u^2} - \beta r u, 
\label{equ0}
\end{align}
as the left-hand side is $0$, we can use this to calculate the computational error of our eigenvalue by integrating over the square of the right-hand side of \eqref{equ0}.
\begin{align}
\Delta \beta &= \int_0^R \left((ru_r)_r - \dfrac{m^2}{r} u - \dfrac{r u}{1 + 
\alpha u^2} - \beta r u \right)^2 \, dr. 
\label{error}
\end{align}
As an example of the numerical results, Fig.~\ref{varm} shows the profile of the amplitude of the soliton if we fix $R=40$ and $P_0=200$ while varying the vortex number $m$. We see that increasing $m$ corresponds to an outward shift of the peak. Figure \ref{varp} demonstrates the relationship between the profile function and energy flux $P_0$. An increase in $P_0$ corresponds to an increase in the height of the peak of the profile curve. Furthermore, beyond some threshold, the profile curves start to become visibly asymmetric over $(0, R)$, which is likely to be the threshold governing the behavior as $R\to \infty$. The reason for this conjecture is that the asymmetry will cause $u$ to vanish near $R$; and thus, for an increase $\Delta R$ in $R$, the increase $\Delta I, \Delta P$ in the corresponding integrals is likely to be small due to the asymmetry over the domain. Physically, $R$ is chosen to be large enough such that $u$ is nearly vanished some distance before $R$ for a self-reinforced soliton~\cite{yang}. In this view, only the asymmetric results have applications in physics. Recall Theorem 1; the additional condition $\beta > -\frac{m^2}{R^2} - 1$, which implies exponential decay near $R$, has the potential of categorizing solutions for the $R\to \infty$ case. Unfortunately an increase in $R$ drastically increase the computation's error, dimensions required, and time to be verified with confidence. The $R\to \infty$ case could be studies more in future research. 

Some values for propagation constants $\beta$ are also computed. For example, for $R=20$, $m=1$, and $P_0 = 1, 50, 100, 200, 400, 800$, the numerically computed eigenvalues are $\beta = -1.1419$, $-0.9328$, $-0.7818$, $ -0.6185$, $-0.4792$, $-0.3679$ with corresponding errors $\Delta \beta = 2.9550$\SI{1e-4}, $5.7607$\SI{1e-4}, $4.4359$\SI{1e-4}, $3.6516$\SI{1e-4}, $5.6023$\SI{1e-4}, $0.0015$.
The following table presents some of the computed eigenvalues along with their errors for $R=20$, $m=1$, and various $P_0$.

\begin{center}
\begin{tabular}{c|c|c}
    $P_0$ & $\beta$    & $\Delta \beta$ \\
    \hline
    $1$   & $-1.1419$  & $2.9550\times 10^{-5}$\\ 
    $50$  & $-0.9328$  & $5.7607\times 10^{-4}$\\
    $100$ & $-0.7818$  & $4.4359\times 10^{-4}$\\ 
    $200$ & $-0.6185$  & $3.6516\times 10^{-4}$\\ 
    $400$ & $-0.4792$  & $5.6023\times 10^{-4}$\\ 
    $800$ & $-0.3679$  & $0.0015$
\end{tabular}
\end{center}

Finally, Fig.~\ref{lin} shows $\beta$ as a function of $P_0$ for $R=20$ and $m$ = $1$, $2$, $3$, $4$, $5$. The error bars show the computational error computed with \eqref{error} due to the finite number of dimensions ($N=20$). The corresponding upper bounds, proved with Lemma \ref{upper}, are shown as dotted lines.

\begin{figure}
\centering
\includegraphics[width=0.8\textwidth]{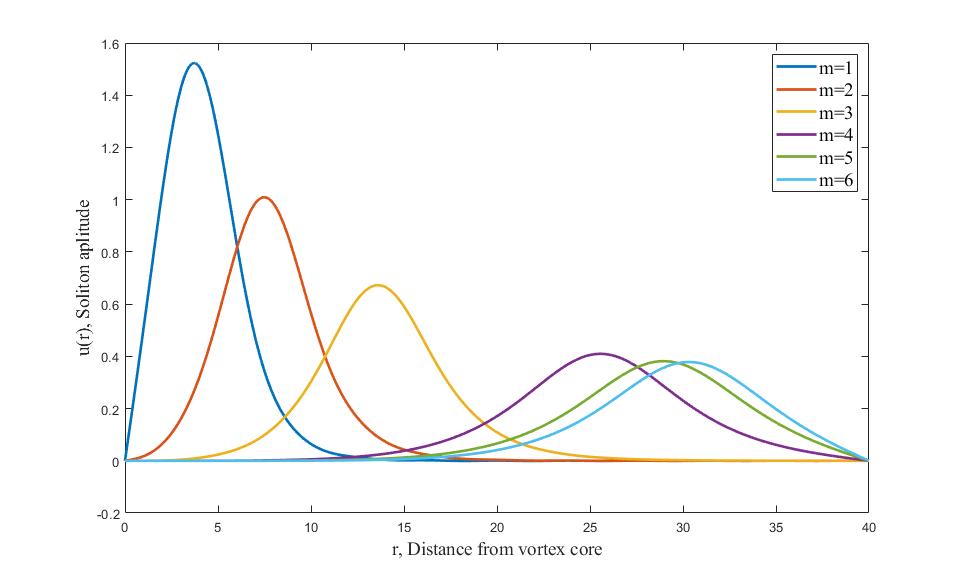}
\caption{\small{Solitons amplitude for $R=40$, $P_0=200$, and $m = 1$, $2$, $3$, $4$, $5$, $6$.}}
\label{varm}
\end{figure}
\begin{figure}
\centering
\includegraphics[width=0.8\textwidth]{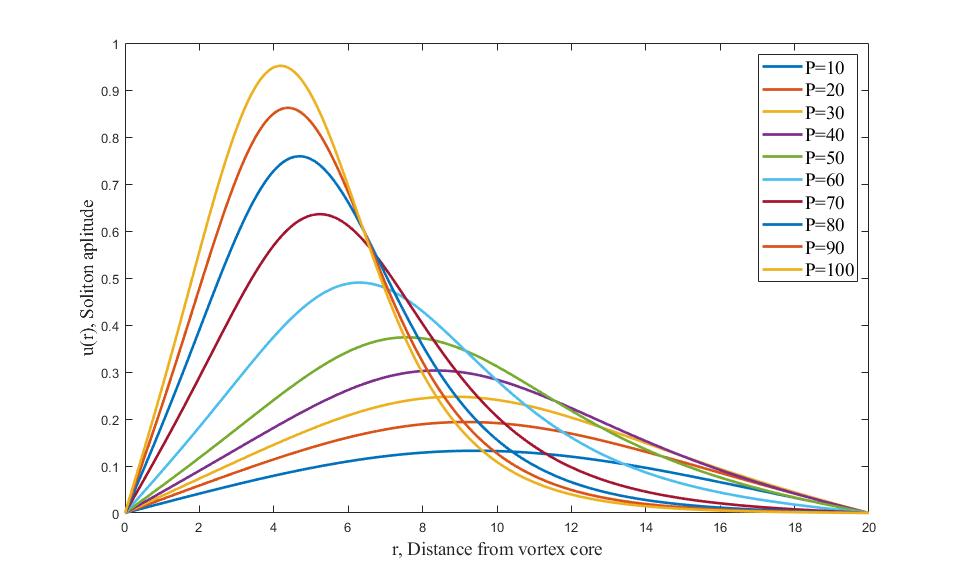}
\caption{\small{Solitons amplitude for $R=20$, $m=1$, and $P_0 = 10$, $20$, $30$, $40$, $50$, $60$, $70$, $80$, $90$, $100$.}}
\label{varp}
\end{figure}
\begin{figure}
\centering
\includegraphics[width=0.8\textwidth]{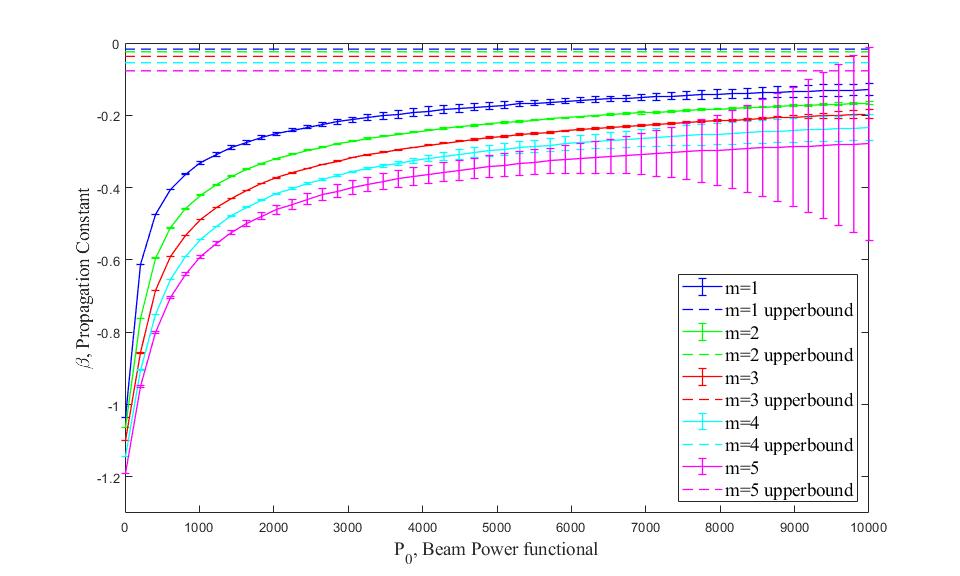}
\caption{{Propagation constant with respect to energy flux for $R=20$, $m=1$, $2$, $3$, $4$, $5$ plotted with $x$-axis. Error bars are plotted accordingly. Dashed lined are upper bounds of $\beta$ proved by Lemma \ref{upper}}}
\label{lin}
\end{figure}

\newpage 
\medskip

\appendix
\renewcommand{\thesection}{\Alph{section}.\arabic{section}}
\begin{appendices}
\setcounter{section}{0}
\section{Additional Lemmas}
\begin{lemma}
If $u$ and $v$ are nonzero scalar multiples of each other that satisfy positive solutions to \eqref{45}, then $|m_1|=|m_2|$ and $\beta_1 = \beta_2$.
\label{2sol}

\end{lemma}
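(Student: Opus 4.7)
The plan is to exploit the proportionality $v = k u$ for some nonzero scalar $k \in \mathbb{R}$ to reduce the coupled system \eqref{45} into two equations in the same unknown $u$, and then subtract them to force the parameters to agree. Writing $v = ku$, one has $(rv_r)_r = k (ru_r)_r$ and $u^2 + v^2 = (1 + k^2) u^2$, so the second equation of \eqref{45}, after dividing by $k$, becomes
\begin{align*}
(ru_r)_r - \frac{m_2^2}{r} u - \frac{ru}{1 + (1 + k^2) u^2} = \beta_2 r u,
\end{align*}
while the first equation (with the same substitution for $v$) reads
\begin{align*}
(ru_r)_r - \frac{m_1^2}{r} u - \frac{ru}{1 + (1 + k^2) u^2} = \beta_1 r u.
\end{align*}

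Subtracting these two identities cancels the $(ru_r)_r$ term and the nonlinear term in a single stroke, leaving
\begin{align*}
\frac{m_2^2 - m_1^2}{r}\, u(r) = (\beta_1 - \beta_2)\, r\, u(r), \qquad r \in (0, R).
\end{align*}
Since $u$ is a positive solution in the sense of Definition \ref{PosSol}, we have $u(r) > 0$ for every $r \in (0, R)$, so we may divide through by $u(r)$ to obtain the polynomial identity $(\beta_1 - \beta_2)\, r^2 = m_2^2 - m_1^2$ valid for all $r$ in the open interval $(0, R)$.

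The right-hand side is a constant while the left-hand side is a polynomial in $r$, and equality on an interval forces matching of coefficients. This yields $\beta_1 - \beta_2 = 0$ and $m_2^2 - m_1^2 = 0$ simultaneously, which gives $\beta_1 = \beta_2$ and $|m_1| = |m_2|$ as claimed. No analytic obstacles arise here; the only subtle point is invoking the strict positivity of $u$ on $(0, R)$ to justify the division, which is exactly what Definition \ref{PosSol} guarantees.
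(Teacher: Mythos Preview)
Your proof is correct and follows essentially the same approach as the paper: substitute the proportionality into \eqref{45}, subtract the two resulting equations in $u$, and use positivity of $u$ on $(0,R)$ to obtain a relation forcing $m_1^2=m_2^2$ and $\beta_1=\beta_2$. Your write-up is in fact slightly cleaner, since you explicitly justify the division by $u(r)$ via Definition~\ref{PosSol} and correctly phrase the resulting identity as a polynomial equation in $r$.
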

\begin{proof}
Suppose that both $u$ and $v$ are nonzero and positive. By making one a scalar multiple of the other, we can set $v = u\sqrt{\alpha - 1} $, $\alpha > 1$, and then \eqref{45} reduces to
\begin{align}
(ru_r)_r - \frac{m_1^2}{r} u - \frac{ru}{1 + \alpha u^2} &= \beta_1 ru, \label{u}\\
(ru_r)_r - \frac{m_2^2}{r} u - \frac{ru}{1 + \alpha u^2} &= \beta_2 ru. \label{v}
\end{align}
Subtracting \eqref{u} from \eqref{v}, we can simplify the difference as
\begin{align}
-(m_1^2 - m_2^2) &= (\beta_1 - \beta_2) r^2 u.
\end{align}
Given that $m_1, m_2, \beta_1, \beta_2$ are constants while there exist more than one different $r^2 u > 0$ in the domain, the only way to make this equation hold for all $r\in (0, R)$ is to have $0$ on both sides, implying that $|m_1| = |m_2|$ and $\beta_1=\beta_2$, as claimed.
\end{proof}


\end{appendices}


\end{document}